\newcommand\cyr{%
\renewcommand\rmdefault{wncyr}%
\renewcommand\sfdefault{wncyss}%
\renewcommand\encodingdefault{OT2}%
\normalfont
\selectfont}
\DeclareTextFontCommand{\textcyr}{\cyr}
\DeclareFontFamily{OT1}{rsfs}{}
\DeclareFontShape{OT1}{rsfs}{n}{it}{<-> rsfs10}{}
\DeclareMathAlphabet{\mathscr}{OT1}{rsfs}{n}{it}
\numberwithin{equation}{section}
\newtheorem{theorem}{Theorem}[section]
\newtheorem{lemma}[theorem]{Lemma}
\newtheorem{pro}[theorem]{Proposition}
\theoremstyle{definition}
\theoremstyle{remark}
\renewcommand{\ker}{\operatorname{Ker}}
\newcommand{\Hom}{\operatorname{Hom}}
\newcommand{\ann}{\operatorname{ann}}
\newcommand{\Char}{\operatorname{char}}
\newcommand{\Span}{\operatorname{span}}
\newcommand{\fkR}{\mathfrak{R}}
\newcommand{\e}{\mathfrak{e}}
\def\deg{\operatorname{deg}}
\begin{document}
\title[The existence of balanced neighborly polynomials]{The existence of balanced neighborly polynomials}

\author[N.T.T. Tam]{Nguyen Thi Thanh Tam}
\address{Hung Vuong University, Phu Tho, Viet Nam}
\email{nguyenthithanhtam@hvu.edu.vn\\thanhtamnguyenhv@gmail.com}


\keywords{Balanced neighborly polynomials, balanced neighborly simplicial spheres, upper bound theorem.}

\thanks{ 
This work was partially funded by Vingroup Joint Stock Company and supported by the PhD Scholarship Programme of Vingroup Innovation Foundation (VINIF), Vingroup Big Data Institute (VINBIGDATA)}

\begin{abstract} 
Inspired by the definition of balanced neighborly spheres, we define balanced neighborly polynomials and study the existence of these polynomials. The goal of this article is to construct balanced neighborly polynomials of type $(k,k,k,k)$ over any field $K$ for all $k \neq 2$, and show that a balanced neighborly polynomial of type $(2,2,2,2)$ exists if and only if $\Char(K) \neq 2$. Besides, we also discuss a relation between balanced neighborly polynomials and balanced neighborly simplicial spheres.
\end{abstract}

\maketitle

\section{Introduction}
Finding sharp upper bounds for the Hilbert function of certain graded algebras is
an important problem relating both commutative algebra and combinatorics. One of the most famous results on this topic is Stanley's solution of the Upper Bound Conjecture, which gives an upper bound on the number of faces of simplicial spheres by translating the problem into a problem on upper bounds of Hilbert function of graded algebras. In this paper,
we study upper bounds for the Hilbert function of Gorenstein algebras defined by certain polynomials, which we call balanced polynomials.

The motivation of this paper comes from the study of face numbers of balanced simplicial complexes. Balanced simplicial complexes are defined by Stanley \cite{St1} and their face number has been studied by
many researchers \cite{BFS, KN}. One interesting problem on balanced simplicial complexes is to find the sharp upper bound of the number of $i$-faces of balanced simplicial spheres with $n$ vertices for a fixed
integers $i$ and $n$. Recently, to study this problem, Zheng \cite{Zh} defined the notion of balanced neighborly spheres and study their existence. Balanced neighborly spheres are important ssince if a
balanced neighborly sphere of type $(k,k,...,k)$ exists then it maximizes the numbers of faces of balanced simplicial $(d - 1)$-spheres with $kd$ vertices. Especially, the paper \cite{Zh} proved that balanced neighborly spheres of type $(2, 2, 2, 2)$
do not exist, but type $(3, 3, 3, 3)$ exist. Also, Zheng asked if balanced neighborly spheres of type $(k, k, k, k)$ exist for all $k$  \cite[Question 15]{Zh}. Motivated by this work of Zheng, we define balanced neighborly polynomials (see Section 2 for the definition) and study their existence. As we will prove in Proposition 4.1, if balanced neighborly simplicial spheres of type $(n_1, \ldots, n_d)$ exist, then balanced neighborly polynomials of type $(n_1, \ldots, n_d)$ exist over any field $K.$ Thus, the study of the existence problem of balanced neighborly polynomial can be considered as an algebraic abstraction of the existence problem on balanced neighborly spheres. On the other hand, the goal of this paper is to construct a balanced neighborly polynomial of type $(k,k,k,k)$ over any field $K$ for all $k \neq 2$, (see Theorems \ref{bd1}, \ref{bd2}, and  \ref{bd3}) and show that a balanced neighborly polynomial of type $(2,2,2,2)$ exists only when $\Char(K) \neq 2$ (see Theorem \ref{3.1}).
 Moreover, the results give an alternative proof for Zheng's result proving that balanced neighborly simplicial spheres of type $(2,2,2,2)$ do not exist. 

The structure of this article is organized as follows. In section 2, we discuss basic properties of balanced neighborly polynomials. In section 3, we construct a balanced neighborly polynomial of type $(k,k,k,k)$ for any $k \neq 2$ and prove that there exists a balanced neighborly polynomial of type $(2,2,2,2)$ only when $\Char K \neq 2$.  In section 4, we discuss a relation between balanced neighborly polynomials and balanced neighborly simplicial spheres.
\section{Balanced neighborly polynomials}
 In this section we introduce notations that will be used throughout this article. 
 Let $d, n_1, \ldots, n_d$ be positive integers, and let $R:=K[x_{i j} \mid 1 \leq i \leq d, 1 \leq j \leq n_i]$ be a polynomial ring over a field $K$.
The ring $R$ has a $\mathbb{Z}^d$-graded structure by setting $\deg (x _{i j}) = \e_i$ for $1 \leq i \leq d$, where $\e_1, \ldots, \e_d$ are the unit vectors of $\mathbb{Z}^d$. 
We say that $f \in R$ is a {\it balanced polynomial }if $f$ is $\mathbb{Z}^d$-graded  and $\deg(f)=(1,1, \ldots, 1).$ For a balanced polynomial $f \in R$ and $S \subseteq [d] = \{1,2, \ldots,d \}$, we set 
$$H(f,S) = \dim_K \{ g(\partial_{i j}) f  \mid g(x_{i  j}) \in R_{\e_S}\},$$
where $\partial_{i j} = {\partial}/{\partial x_{i j}}$, $\e_S = \underset {i \in S} {\sum}\e_i \in \mathbb{Z}^d$, and $R_{\e_S}$ is the graded component of $R$ of degree $\e_S$. Moreover, a balanced polynomial $f$ is said to be {\it neighborly of type $(n_1, \ldots, n_d) \in  \mathbb{Z}^d$} if for every subset $S \subseteq [d]$ with $|S| \leq \dfrac{d}{2}$, we have $H(f,S) = \underset {k\in S} {\prod} n_k$. 

For a graded polynomial $f = f(x_1,\ldots,x_n) \in \fkR= K[x_1, \cdots, x_n],$ let 
$$\ann(f) = \{g(x_1,\ldots,x_n) \in \fkR \mid g(\partial_{1}, \cdots, \partial_n) f = 0\},$$ where 
$\partial_{i} = \dfrac{\partial}{\partial x_{i}}.$ We say that a polynomial $f$ is {\it squarefree} if $f$ is a linear combination of squarefree monomials. For $F \subseteq [n]$, let $x^F= \underset{i \in F} {\prod}x_i$. If
$$f=\sum_{F\subset [n], |F|=d} \alpha_Fx^F, \quad g=\sum_{F\subset [n], |F|=d} \beta_F x^F, \text{ where }\alpha_F, \beta_G \in K $$
are squarefree polynomials having the same degree, then $g(\partial_{1}, \cdots, \partial_n)f= \underset{F \subset [n]} {\sum} \alpha_F \beta_F$. We say that an Artinian graded $K$-algebra 
$\fkR/I = \displaystyle \bigoplus_{k=0}^d (\fkR/I)_k$, where  $(\fkR/I)_d \neq 0$ is {\it Gorenstein of socle degree $d$} if $0:_{\fkR/I} (x_1, \ldots, x_n) = (\fkR/I)_d$ has $K$-dimension $1$, where $I$ is an ideal of $\fkR$. Then we have the following property.
\begin{lemma} \label{2.1} We have
\begin{itemize}
\item [(i)] If $f$ is a squarefree polynomial of degree $d$, then $x_1^2, \ldots, x_n^2 \in \ann(f)$ and $\fkR/(\ann(f))$ is an Artinian Gorenstein graded $k$-algebra of socle degree $d$.
\item [(ii)] If $I$ is a homogenerous ideal such that $\fkR/I$ is  an Artinian Gorenstein graded $k$-algebra of socle degree $d$ and $x_1^2, \ldots, x_n^2 \in I$, then there is a squarefree polynomial such that $\ann(f)=I.$
\end{itemize}
\end{lemma}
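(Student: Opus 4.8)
Both statements are the squarefree (multilinear) case of Macaulay's theory of inverse systems, and the point of the squarefreeness hypothesis is that it removes the characteristic-$p$ subtleties of that theory: every monomial that ever gets differentiated has all exponents $0$ or $1$, so all factorials that would otherwise appear equal $1$ and the polynomial-differentiation action agrees with the contraction action. I would prove the two parts by essentially the same computation.

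\emph{Part (i).} Write $f=\sum_{|F|=d}\alpha_F x^F$ with not all $\alpha_F=0$. Each $x^F$ has $x_i$-degree $\le 1$, so $\partial_i^2 f=0$, i.e. $x_i^2\in\ann(f)$; hence $(x_1^2,\dots,x_n^2)\subseteq\ann(f)$, so $\fkR/\ann(f)$ is a quotient of $\fkR/(x_1^2,\dots,x_n^2)$, hence Artinian, and it is graded since $\ann(f)$ is homogeneous. As $g(\partial)f=0$ once $\deg g>d$, it lives in degrees $0,\dots,d$. The map $\fkR_d\to K$, $g\mapsto g(\partial)f$, has kernel $\ann(f)_d$ and is onto (if $\alpha_F\neq0$ then $\partial^F f=\alpha_F\neq0$, since $\partial^F x^{F'}=\delta_{FF'}$ for $|F|=|F'|=d$), so $\dim_K(\fkR/\ann(f))_d=1$. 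Finally, if $g$ is homogeneous of degree $k<d$ with $x_j g\in\ann(f)$ for every $j$, then $\partial_j(g(\partial)f)=0$ for all $j$, so $g(\partial)f$ is constant; being homogeneous of positive degree $d-k$ it is $0$, so $g\in\ann(f)$. Hence $0:_{\fkR/\ann(f)}(x_1,\dots,x_n)=(\fkR/\ann(f))_d$, and $\fkR/\ann(f)$ is Artinian Gorenstein of socle degree $d$.

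\emph{Part (ii).} Since $x_i^2\in I$, $A:=\fkR/I$ is a quotient of $\fkR/(x_1^2,\dots,x_n^2)$, so each $A_k$ is spanned by the images of the squarefree monomials $x^F$ with $|F|=k$, and $A_k=0$ for $k>d$. Fix an isomorphism $\phi\colon A_d\xrightarrow{\ \sim\ }K$ (possible since $\dim_K A_d=1$) and set $f:=\sum_{|F|=d}\phi(\overline{x^F})x^F$; this is a squarefree polynomial of degree $d$, nonzero because $A_d\neq0$ forces some $\overline{x^F}\neq0$. I would show $\ann(f)=I$ degree by degree. In degrees $>d$ both are $\fkR_k$. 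In degree $d$, for $g\in\fkR_d$ a direct computation gives $g(\partial)f=\phi(\bar g)$ (non-squarefree monomials of $g$ annihilate $f$ and vanish in $A$, and $\partial^F x^{F'}=\delta_{FF'}$ when $|F|=|F'|=d$), whence $g(\partial)f=0\iff\bar g=0$, i.e. $\ann(f)_d=I_d$. For $0\le k<d$ I would bootstrap: for $g\in\fkR_k$ the polynomial $g(\partial)f$ is squarefree and homogeneous of degree $d-k$, so it vanishes iff $h(\partial)\bigl(g(\partial)f\bigr)=(hg)(\partial)f=0$ for all $h\in\fkR_{d-k}$ (a nonzero squarefree homogeneous polynomial has a nonzero top-order derivative); by the degree-$d$ case this says $\overline{hg}=0$ in $A_d$ for all $h\in\fkR_{d-k}$, i.e. $\bar g\cdot A_{d-k}=0$, which by nondegeneracy of the multiplication pairing $A_k\times A_{d-k}\to A_d$ of the Artinian Gorenstein algebra $A$ is equivalent to $\bar g=0$, i.e. $g\in I$.

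The only non-elementary input, and the step I would handle most carefully, is the nondegeneracy of the pairing $A_k\times A_{d-k}\to A_d$ used in Part (ii) --- equivalently the Macaulay correspondence between Artinian Gorenstein graded algebras of socle degree $d$ and nonzero degree-$d$ forms. I would quote it from the standard literature on inverse systems/apolarity, or prove the one implication needed (namely $0\neq\bar g\in A_k\Rightarrow\bar g\,A_{d-k}\neq0$) from the fact that a nonzero ideal of an Artinian algebra meets the socle: take $j$ maximal with $(\bar g)\cap\mathfrak m^{\,j}\neq0$, pick $0\neq a$ in it, note $\mathfrak m a=0$ forces $a\in A_d$, write $a=\bar g\bar c$, and replace $\bar c$ by its degree-$(d-k)$ component. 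I would also flag, without dwelling on it, the usual differentiation-versus-contraction subtlety in Macaulay duality, which does not arise here precisely because every monomial in sight is squarefree.
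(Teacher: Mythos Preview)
Your proof is correct and follows essentially the same route as the paper's: both parts hinge on the observation that $\partial^F x^{F'}=\delta_{FF'}$ for squarefree monomials of equal degree, and Part~(ii) in both cases proceeds by first matching $I_d=\ann(f)_d$ and then propagating to lower degrees via the Gorenstein socle condition. The only organizational difference is that the paper invokes Part~(i) to know $\fkR/\ann(f)$ is Gorenstein and then uses the general fact ``$J_d$ determines $J$'' on both sides, whereas you handle the $\ann(f)$ side directly by differentiating and use nondegeneracy of the pairing only on the $\fkR/I$ side---slightly more self-contained, but the same idea.
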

\begin{proof}
(i) Assume $f$ is squarefree. Then $\partial_i^2f=0, \forall i =1,2, \ldots, n$. So $x_1^2, \ldots, x_n^2 \in \ann(f)$. Next, we prove that $0:_{\fkR/\ann(f)}(x_1, \ldots, x_n) = (\fkR/\ann(f))_d.$ Let $g \in \fkR$ be a homogeneous element of degree $k<d$ with $g \notin \ann(f)$. We must prove that $g $ is not an element of $0:_{\fkR/\ann(f)} (x_1, \ldots, x_n)$, that is, $x_j g \notin \ann(f)$ for some $j$. Since $g \notin \ann(f),$ we have $g(\partial_{1}, \cdots, \partial_n)f \neq 0$. Since $h=g(\partial_{1}, \cdots, \partial_n)f$ is a squarefree polynomial of degree  $\geq 1$, $\partial_j(h) \neq 0$ for some $j$. From this we obtain that $x_jg \notin \ann(f).$ Now we prove $\dim_K (\fkR/\ann(f))_d=1.$ Indeed, let 
$$f=\underset{F\subset [n], |F|=d} {\sum} \alpha_Fx^F \text{ for }\alpha_F \in K, $$ $E_k=\Span_K\{ x^F : |F|=k\}$ and $E=E_0 \oplus E_1\oplus \cdots \oplus E_k$ be the set of all squarefree polynomials at most $k$. We consider the linear map 
\begin{align*}
\psi: E_d &\to K\\
g=\sum_{F\subset [n], |F|=d} \beta_F x^F &\mapsto \sum_{F\subset [n], |F|=d} \beta_F \alpha_F = g(\partial_{1}, \cdots, \partial_n)f
\end{align*}
Then $\ker \psi = \{ g \in E_d \mid g(\partial_{1}, \cdots, \partial_n)f = 0\} = (\ann(f) \cap E)_d$. Since $\psi$ is a non-zero map, $\dim_K \ker \psi = \dim_K E_d - 1.$ Moreover, $\ann(f) = (x_1^2, \ldots,x_n^2) \oplus (\ann(f) \cap E)$ and $\fkR= (x_1^2, \ldots,x_n^2) \oplus E$. Then
\begin{align*}
 \dim_K(\fkR/\ann(f))_d & = \dim_K \fkR_d - \dim_K \ann(f)_d\\
& =\dim_K((x_1^2, \ldots,x_n^2)_d \oplus E_d)- \dim_K (x_1^2, \ldots,x_n^2)_d \oplus(\ann(f) \cap E)_d\\
&=\dim_K E_d - \dim_K \ker \psi =1.
\end{align*} 
(ii) Suppose $\fkR/I$ is Artinian Gorenstein of socle degree $d$ and $x_1^2, \ldots,x_n^2 \in I$. Then $I_d=(x_1^2, \ldots,x_n^2)_d \bigoplus (E\cap I)_d$ as $K$-vector space and 
\begin{align*}
\dim_K(E\cap I)_d &= \dim_K I_d - \dim_K (x_1^2, \ldots,x_n^2)_d\\
&=(\dim_K R_d - 1) - \dim_K (x_1^2, \ldots,x_n^2)_d\\
&=\dim_K E_d -1.
\end{align*}
Since $(E\cap I)_d$ is a codimension $1$ subspace of $E_d$, it can be written as a kernel of a $K$-linear function, that is, 

$$(E \cap I)_d = \left(\Big \{ \sum_{F\subset [n], |F|=d} \beta_F x^F \in E_d \mid \sum_{F\subset [n], |F|=d} \alpha_F \beta_F =0 \Big \}\right)$$
for some $\alpha_F \in K$ such that $|F|=d.$
Let $f = \underset {F\subset [n], |F|=d} {\sum} \alpha_F x^F$. We claim that $\ann(f) = I$. Indeed, 
$$(E\cap I)_d = \left(\Big \{\sum_{F\subset [n], |F|=d} \beta_F  x^F \in E_d \mid \sum_{F\subset [n], |F|=d} \alpha_F \beta_F =g(\partial_{i j})f= 0 \Big \}\right)=(\ann(f) \cap E)_d.$$
In particular, 
$$I_d = (x_1^2, \ldots,x_n^2)_d+ (E \cap I)_d = (x_1^2, \ldots,x_n^2)_d+ (E\cap \ann(f))_d=\ann(f)_d.$$
Note that for any ideal $J$ of $\fkR$, if $\fkR/J$ is an Artinian Gorenstein algebra of socle degree $d$, then $J_d$ determines the whole $J$. In fact, for any polynomial $g \in \fkR$ of degree $t < d$, one has $g \in J $ if and only if $(x_1, \ldots, x_n)^{d-t} g \subset J_d$. 
Since both $\fkR/I$ and $\fkR/\ann(f)$ are Artinian Gorenstein algebras of socle degree $d$, $I_d = \ann(f)_d$ implies $I = \ann(f)$.
\end{proof}

From the definition of $H(f,S)$ and $\ann(f)$ as above, it is easy to prove that $H(f,S)$ has the following symmetry.
\begin{lemma} \label {2.2} If $f \in \fkR$ is a balanced polynomial of degree $d$, then for any $S\subseteq [d]$
\begin{itemize}
\item [(i)] $H(f,S)=\dim_K (\fkR/\ann(f))_{\e_S}$,
\item [(ii)]$H(f,S) = H(f, [d] \backslash S)$. 
\end{itemize}
\end{lemma}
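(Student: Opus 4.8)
The plan is to get (i) from a rank--nullity count and then read off (ii) by pairing the graded piece $R_{\e_S}$ against its complement $R_{\e_{[d]\setminus S}}$.

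For (i), I would consider the $K$-linear map $\varphi_S\colon R_{\e_S}\to R$ given by $\varphi_S(g)=g(\partial_{ij})f$. By definition its image is precisely the space computing $H(f,S)$, so $H(f,S)=\dim_K R_{\e_S}-\dim_K\ker\varphi_S$. Since $\deg f=(1,\dots,1)$ and $\deg g=\e_S$, each $g(\partial_{ij})f$ is homogeneous of degree $\e_{[d]}-\e_S=\e_{[d]\setminus S}$; running this over the homogeneous components of an arbitrary element of $R$ shows both that $\ann(f)$ is a $\mathbb{Z}^d$-homogeneous ideal and that $\ker\varphi_S=\ann(f)\cap R_{\e_S}=(\ann(f))_{\e_S}$. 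Hence $H(f,S)=\dim_K R_{\e_S}-\dim_K(\ann(f))_{\e_S}=\dim_K(R/\ann(f))_{\e_S}$.

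For (ii), I would introduce the bilinear form $B\colon R_{\e_S}\times R_{\e_{[d]\setminus S}}\to K$, $B(g,h)=(gh)(\partial_{ij})f$; this is scalar-valued because $\deg(gh)=\e_S+\e_{[d]\setminus S}=(1,\dots,1)=\deg f$. Using $(gh)(\partial_{ij})f=h(\partial_{ij})\big(g(\partial_{ij})f\big)$, the left radical of $B$ consists of the $g\in R_{\e_S}$ with $h(\partial_{ij})\big(g(\partial_{ij})f\big)=0$ for all $h\in R_{\e_{[d]\setminus S}}$. The only step with any content is the observation that contraction by monomials is nondegenerate in degree $\e_{[d]\setminus S}$: if $p\in R_{\e_{[d]\setminus S}}$ satisfies $m(\partial_{ij})p=0$ for every monomial $m$ of degree $\e_{[d]\setminus S}$, then $p=0$. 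This holds because $\e_{[d]\setminus S}$ is a $0/1$ vector, so every monomial of that degree is a squarefree product $\prod_{i\notin S}x_{i,j_i}$, whence $m(\partial_{ij})m'=\delta_{m,m'}$ for two such monomials and $m(\partial_{ij})p$ is just the coefficient of $m$ in $p$. It follows that the left radical of $B$ equals $(\ann(f))_{\e_S}$, and by the same argument with the roles of $S$ and $[d]\setminus S$ swapped the right radical equals $(\ann(f))_{\e_{[d]\setminus S}}$. Comparing the two ways of computing $\rank B$ gives
\[
\dim_K R_{\e_S}-\dim_K(\ann(f))_{\e_S}=\rank B=\dim_K R_{\e_{[d]\setminus S}}-\dim_K(\ann(f))_{\e_{[d]\setminus S}},
\]
and by (i) this is exactly $H(f,S)=H(f,[d]\setminus S)$.

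I do not anticipate a genuine obstacle: the argument is bookkeeping with the $\mathbb{Z}^d$-grading plus the elementary fact that monomials of a fixed $0/1$ multidegree are orthonormal for the contraction pairing. The two points to be careful about are verifying that $\ann(f)$ really is $\mathbb{Z}^d$-homogeneous (so that $(\ann(f))_{\e_S}$ is the kernel appearing above) and not conflating the $\mathbb{Z}$-grading with the $\mathbb{Z}^d$-grading. As an alternative, (ii) could be deduced from (i) together with Lemma~\ref{2.1}: a balanced polynomial of degree $d$ is squarefree of degree $d$, so $R/\ann(f)$ is Artinian Gorenstein, and one checks its socle is one-dimensional in multidegree $(1,\dots,1)$, so that the multiplication pairing $(R/\ann(f))_{\e_S}\times(R/\ann(f))_{\e_{[d]\setminus S}}\to(R/\ann(f))_{(1,\dots,1)}\cong K$ is perfect --- the same computation in different clothing.
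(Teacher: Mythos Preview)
Your proof is correct. Part (i) is essentially identical to the paper's argument: both set up the linear map $g\mapsto g(\partial_{ij})f$ on $R_{\e_S}$, identify its kernel with $(\ann f)_{\e_S}$, and apply rank--nullity.

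For part (ii) the underlying idea is the same perfect pairing, but the executions differ. The paper passes immediately to the quotient $A=R/\ann(f)$, invokes Lemma~\ref{2.1} to say $A$ is Artinian Gorenstein, and then quotes that the multiplication $A_{\e_S}\times A_{\e_{[d]\setminus S}}\to A_{\e_{[d]}}\cong K$ is non-degenerate. You instead stay at the level of $R$, define $B(g,h)=(gh)(\partial_{ij})f$ on $R_{\e_S}\times R_{\e_{[d]\setminus S}}$, and compute the left and right radicals by hand using the orthonormality of squarefree monomials under contraction. Your route is more elementary and self-contained: it does not rely on Lemma~\ref{2.1} or on knowing that Gorenstein duality respects the finer $\mathbb{Z}^d$-grading (a point the paper's proof passes over quickly). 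The paper's route is shorter and more conceptual once that machinery is in place. You in fact note this alternative yourself at the end, so you have both arguments.
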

\begin{proof}
(i) Set $V_{\e_S}(f) = \{ g(\partial_{1}, \cdots, \partial_n)f  \mid g(x_1,\ldots, x_n) \in \fkR_{\e_S}\}$. Then $H(f,S) = \dim_K V_{\e_S} (f).$ Consider the short exact sequence 
$$\xymatrix{0\ar[r]& \ker \varphi \ar[r] & \fkR_{\e_S} \ar[r]^\varphi & V_{\e_S} (f) \ar[r]&0},$$
where $\varphi(g) = g(\partial_{i j})f$ for all $g \in \fkR_{\e_S}$. Note that $\ker \varphi = \ann(f)_{\e_S}$.
Therefore $$H(f,S) = \dim_K \fkR_{\e_S} - \dim_K \ann(f)_{\e_S} = \dim_K (\fkR/\ann(f))_{\e_S}.$$
(ii) Set $A=\fkR/\ann(f)=\displaystyle \bigoplus_{k=1}^d A_k.$ Since $\fkR/\ann(f)$ is a Gorenstein ring. The bilinear map $$A_{\e_A}\times A_{\e_{[d]\backslash A}} \to A_{\e_{[d]}} \cong K$$ is defined by the multiplication non-degenerate. Therefore, $A_{\e_S} \cong \Hom_K(A_{\e_{[d]\backslash S}},K) \cong A_{\e_{[d]\backslash S}},$ and the statement follows from (i).
\end{proof}
In the next section, we study balanced neighborly polynomials of degree $4$. For this, the next Lemma says that we only need to consider balanced neighborly polynomials of type $(k,k,k,k)$.
\begin{lemma} \label{2.3}
  Let $f$ be a balanced neighborly polynomial of type $(n_1, \cdots, n_d) \in  \mathbb{Z}^d$. If $d$ is even then $n_1 =n_2= \cdots = n_d.$ 
\end{lemma}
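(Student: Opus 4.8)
The plan is to exploit the symmetry of $H(f,S)$ established in Lemma~\ref{2.2}(ii) together with the neighborliness condition. Since $f$ is neighborly of type $(n_1,\dots,n_d)$, for every $S\subseteq[d]$ with $|S|\le d/2$ we have $H(f,S)=\prod_{k\in S}n_k$. Because $d$ is even, write $d=2m$. For any subset $S$ with $|S|=m$, both $S$ and $[d]\setminus S$ have size $m\le d/2$, so the neighborliness condition applies to each, and Lemma~\ref{2.2}(ii) gives
$$\prod_{k\in S}n_k \;=\; H(f,S)\;=\;H(f,[d]\setminus S)\;=\;\prod_{k\in[d]\setminus S}n_k.$$
Thus $\prod_{k\in S}n_k=\prod_{k\notin S}n_k$ for every $m$-element subset $S\subseteq[d]$.

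The remaining step is purely combinatorial: show that if $n_1,\dots,n_d$ are positive integers with $\prod_{k\in S}n_k=\prod_{k\notin S}n_k$ for all $|S|=m$, then all the $n_k$ are equal. I would argue as follows. Fix two indices, say $1$ and $2$. Choose any $(m-1)$-element subset $T\subseteq\{3,\dots,d\}$ (possible since $d-2=2m-2\ge m-1$), and set $S=T\cup\{1\}$ and $S'=T\cup\{2\}$; both have size $m$. Applying the relation to $S$ gives $n_1\prod_{k\in T}n_k=\prod_{k\notin S}n_k$, and the complement $[d]\setminus S=(\{3,\dots,d\}\setminus T)\cup\{2\}$; similarly for $S'$ the complement is $(\{3,\dots,d\}\setminus T)\cup\{1\}$. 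Dividing the two identities, the common factors $\prod_{k\in T}n_k$ and $\prod_{k\in\{3,\dots,d\}\setminus T}n_k$ cancel, leaving $n_1/n_2=n_2/n_1$, hence $n_1^2=n_2^2$, so $n_1=n_2$ as these are positive integers. Since $1$ and $2$ were arbitrary, $n_1=\cdots=n_d$.

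I do not expect any serious obstacle here; the only point requiring a little care is checking that the index bookkeeping works when $d$ is small (e.g.\ $d=2$, where $m=1$ and $T=\emptyset$, so the relation $n_1=n_2$ is immediate), but the case division $d\ge 4$ versus $d=2$ is routine. The essential content is entirely in the symmetry identity $H(f,S)=H(f,[d]\setminus S)$ from Lemma~\ref{2.2}, which reduces the statement to the elementary number-theoretic cancellation above.
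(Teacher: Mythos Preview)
Your proof is correct and follows essentially the same approach as the paper: both invoke Lemma~\ref{2.2}(ii) to obtain $\prod_{k\in S}n_k=\prod_{k\notin S}n_k$ for $|S|=m=d/2$, then finish with an elementary combinatorial argument. The only difference is in that last step: the paper orders $n_1\ge n_2\ge\cdots\ge n_d$ and applies the identity once to $S=\{1,\dots,m\}$, so that the product of the $m$ largest equals the product of the $m$ smallest, forcing equality; your swap argument with $S=T\cup\{1\}$ and $S'=T\cup\{2\}$ reaches the same conclusion with slightly more bookkeeping.
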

\begin{proof}
By the definition of a balanced polynomial $f \in \fkR$, we see that the $\mathbb{Z}^d$-degree of $f$ is $(1,1, \ldots,1)$. Since $d$ is even, we set $d = 2m$. Assume that $n_1 \geq n_2 \geq \cdots \geq n_d$. Then by Lemma \ref{2.2} (ii), we have
$$n_1 n_2  \cdots n_m = H(f,\{1,2, \cdots, m\}) = H(f,\{m+1, \ldots, 2m\})=n_{m+1} n_{m+2} \ldots n_{2m}.$$
Since $n_1 \geq n_2 \geq \cdots \geq n_d$,  we conclude that $n_1 = n_2 = \cdots = n_m = n_{m+1} = \cdots = n_{2m}.$
\end{proof}
In case $d$ is even, Lemma \ref{2.3} for balanced simplicial spheres was proved by Zheng in \cite{Zh}.
\section{The existence of balanced neighborly polynomials of type $(k,k,k,k)$}
Throughout this section, we consider ring $$R = K[x_1, \ldots, x_k, y_1, \ldots, y_k, z_1, \ldots, z_k, w_1, \ldots, w_k],$$ where $\deg (x_i) = \e_1=(1,0,0,0)$, $\deg (y_i) =\e_2= (0,1,0,0)$, $\deg (z_i) = \e_3=(0,0,1,0)$, $\deg (w_i) =\e_4= (0,0,0,1)$. So, if $f \in R $ is  a balanced polynomial, then we may assume that
\begin{align*}
\label{CT1}
f =\sum_{(i_1, i_2, i_3, i_4)\in { \{1,2,\ldots,k\}}^4} a_{i_1, i_2, i_3, i_4} x_{i_1} y_{i_2} z_{i_3} w_{i_4}, \quad
\text{where } a_{i_1, i_2, i_3, i_4} \in K.
\end{align*}
Therefore, 
\begin{align*}
H(f,\{1,2\})&= H(f,\{ 3,4\})= \dim \Span_K \left\lbrace  \frac{\partial}{\partial x_i}\frac{\partial}{\partial y_j}f \mid 1 \leq i, j \leq k \right\rbrace \\
H(f,\{ 1,3\})&= H(f,\{ 2,4\})= \dim \Span_K \left\lbrace  \frac{\partial}{\partial x_i}\frac{\partial}{\partial z_j}f \mid 1 \leq i, j \leq k\right\rbrace  \\
H(f,\{ 1,4\})&= H(f,\{ 2,3\})= \dim \Span_K \left\lbrace  \frac{\partial}{\partial x_i}\frac{\partial}{\partial w_j}f \mid 1 \leq i, j \leq k\right\rbrace.
\end{align*}
\begin{theorem}\label{3.1}	
There exists a balanced neighborly polynomial of type $(2,2,2,2)$ if and only if $\Char K \neq 2$. 
\end{theorem}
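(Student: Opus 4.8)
The plan is to treat the statement as two separate tasks: an explicit construction when $\Char K \neq 2$, and a non-existence argument when $\Char K = 2$. For the construction, a balanced polynomial of type $(2,2,2,2)$ is determined by the $2^4=16$ coefficients $a_{i_1 i_2 i_3 i_4}$, and by Lemma~\ref{2.2}(ii) the neighborliness condition reduces to checking only the three ``half-size'' conditions $H(f,\{1,2\})=H(f,\{1,3\})=H(f,\{1,4\})=4$, together with the trivial singletons $H(f,\{i\})=2$. Each of the three rank-$4$ conditions says that a certain $4\times 4$ matrix of coefficients (whose rows are indexed by the two free variables, columns by the two differentiated ones) is invertible; for instance $H(f,\{1,2\})=4$ is nondegeneracy of the matrix $\big(\partial_{x_i}\partial_{y_j}f\big)$, a $4\times 4$ array whose entries are $K$-linear forms in the entries of the $2\times 2$ ``slice'' matrices obtained by fixing $(i_3,i_4)$. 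First I would write down a concrete candidate --- the natural symmetric guess is $f=\sum x_i y_i z_i w_i + (\text{a few cross terms})$, or equivalently the polynomial whose coefficient tensor is the ``generic permutation'' tensor --- compute the three determinants, observe that each equals $\pm$ a power of $2$ times something nonzero, and conclude invertibility exactly when $2\neq 0$ in $K$. A clean choice that I expect to work is to take $a_{i_1i_2i_3i_4}=1$ when an even number of the indices equal $2$ and $a_{i_1i_2i_3i_4}=0$ otherwise (or a sign variant), so that the three $4\times 4$ matrices are, up to reordering, tensor products of the $2\times 2$ matrix $\left(\begin{smallmatrix}1&1\\1&-1\end{smallmatrix}\right)$ with itself, whose determinant is $-2$ and whose square has determinant $4$.

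For the converse, suppose $\Char K = 2$ and, for contradiction, that some balanced $f$ is neighborly of type $(2,2,2,2)$. The idea is that the three conditions $H(f,\{1,2\})=H(f,\{1,3\})=H(f,\{1,4\})=4$ cannot hold simultaneously in characteristic $2$. I would organize the coefficients as a $2\times 2\times 2\times 2$ tensor $A=(a_{i_1i_2i_3i_4})$ and translate each rank condition into the invertibility of a $4\times 4$ ``flattening'' of $A$: grouping indices $(1,2)$ versus $(3,4)$ gives one matrix $M_{12|34}$, grouping $(1,3)$ versus $(2,4)$ gives $M_{13|24}$, and grouping $(1,4)$ versus $(2,3)$ gives $M_{14|23}$, and the claim is $\det M_{12|34}\cdot\det M_{13|24}\cdot\det M_{14|23}=0$ whenever $2=0$. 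The cleanest route is to find a universal polynomial identity: the product (or sum, or some symmetric combination) of these three $4\times 4$ determinants, viewed as a polynomial in the $16$ indeterminates $a_{i_1i_2i_3i_4}$ over $\mathbb Z$, is divisible by $2$. One then verifies this identity once and for all (it is a finite computation, e.g. via the hyperdeterminant of the $2\times2\times2\times2$ format or by a direct expansion), after which $\Char K = 2$ forces one of the three determinants to vanish, contradicting neighborliness. As a sanity check this must be consistent with Zheng's theorem via Proposition~4.1.

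I expect the main obstacle to be the characteristic-$2$ non-existence half: producing and verifying the right universal divisibility-by-$2$ identity among the three $4\times 4$ flattening determinants of a $2\times2\times2\times2$ tensor. A brute-force $4!$-term expansion of each determinant is feasible but messy; the smarter approach is to use symmetry. In characteristic $2$, transposition-type symmetries of the tensor become available, and pairing up the $24$ permutations in each determinant expansion into orbits should collapse the sum so that $\det M_{12|34}+\det M_{13|24}+\det M_{14|23}$ (or a similar combination) is manifestly a sum of doubled terms, hence $0$ mod $2$. If that symmetric combination is identically $0$ mod $2$, then at least one summand is $0$, and we are done. I would first nail down exactly which linear combination of the three determinants is divisible by $2$ over $\mathbb Z$ (a small computation over $\mathbb Z/4$ suffices to guess it, and a slightly larger one over $\mathbb Z$ to confirm), and only then write up the pairing argument; the positive-characteristic-avoidance construction in the first paragraph is by comparison routine once the right $f$ is chosen.
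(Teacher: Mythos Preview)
Your plan is essentially the paper's: an explicit $f$ for $\Char K\neq 2$, and for $\Char K=2$ an identity among the three $4\times4$ flattening determinants of the coefficient tensor. The paper records that identity as $\det M-\det N=\det P$ in characteristic~$2$, which is exactly your conjectured relation $\det M_{12|34}+\det M_{13|24}+\det M_{14|23}=0$; from it the paper likewise concludes that one of the three determinants must vanish.

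The non-existence half, however, contains a genuine gap, and the paper's own argument shares it. From $\det M+\det N+\det P=0$ in a field $K$ of characteristic $2$ you infer that ``at least one summand is $0$.'' This inference is valid only when $K=\mathbb{F}_2$, where three elements of $\{0,1\}$ summing to zero cannot all equal $1$; over any larger field of characteristic $2$ it fails, since for instance $1+\omega+\omega^2=0$ in $\mathbb{F}_4$ with all three terms nonzero. In fact the statement as written is false over $\mathbb{F}_4$: take the paper's six-term polynomial but replace the coefficient of $x_1y_1z_2w_2$ by a primitive element $\omega\in\mathbb{F}_4$, obtaining
\[
f=\omega\,x_1y_1z_2w_2+x_1y_2z_1w_1+x_1y_2z_2w_2+x_2y_1z_1w_2+x_2y_1z_2w_1+x_2y_2z_2w_1.
\]
A direct check gives flattening determinants $\det M=\omega$, $\det N=1$, $\det P=\omega^2$, all nonzero, so $H(f,S)=4$ for every two-element $S$ and $f$ is balanced neighborly of type $(2,2,2,2)$ over $\mathbb{F}_4$ despite $\Char\mathbb{F}_4=2$. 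The single linear relation among the three flattening determinants (which really does hold, even integrally up to a sign) therefore cannot prove non-existence beyond $K=\mathbb{F}_2$; your pairing-of-terms strategy can at best recover that relation, and so at best proves the result over $\mathbb{F}_2$. No refinement of the argument will close the gap for general $K$ of characteristic $2$, because the assertion you are trying to prove is not true there.
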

\begin{proof}
We first prove that if  $\Char K \neq 2$ then there exists a balanced neighborly polynomial of type $(2,2,2,2)$. Let
 $$f = -x_1y_1z_2w_2 -x_1y_2z_1w_1 + x_1y_2z_2w_2-x_2y_1z_1w_2 +x_2y_1z_2w_1 +x_2y_2z_2w_1.$$
 We have 
\begin{align*}
H(f,\{ 1,2\})  =&  \dim_K \Span _K \{-z_2w_2, -z_1w_1+ z_2w_2, -z_1w_2 +z_2w_1, z_2w_1 \},\\
 H(f,\{ 1,3\}) = & \dim_K \Span _K \{-y_2w_1, y_2w_2-y_1w_2, -y_1w_2, y_2w_1+ y_1w_1 \},\\
H(f,\{ 1,4\})= & \dim_K \Span _K \{-y_2z_1 , y_2z_2-y_1z_2, y_2z_2 +y_1 z_2, -y_1z_1 \}.
\end{align*} 
 Then  $$H(f,\{ 1,2\})=H(f,\{ 1,3\})=H(f,\{ 1,4\})=4.$$
 So, $f$ is a balanced neighborly polynomial of type $(2,2,2,2)$.
Conversely, suppose that $\Char K = 2$ and let
\begin{align*}
 f &= a_1x_1y_1z_1w_1 + a_2x_1y_1z_1w_2 + a_3x_1y_1z_2w_1 + a_4x_1y_1z_2w_2 + a_5 x_1y_2z_1w_1 + a_6 x_1y_2z_1w_2 \\
 &+ a_7x_1y_2z_2w_1 + a_8x_1y_2z_2w_2 + a_9 x_2y_1z_1w_1 + a_{10} x_2y_1z_1w_2 + a_{11}x_2y_1z_2w_1 + a_{12}x_2y_1z_2w_2\\ &+ a_{13} x_2y_2z_1w_1 + a_{14} x_2y_2z_1w_2 + a_{15}x_2y_2z_2w_1 + a_{16}x_2y_2z_2w_2 ,
 \end{align*}
where $a_i \in K, i = 1,2, \ldots, 16.$ 
We prove that $f$ is not a balanced neighborly polynomial of type $(2,2,2,2)$. Let 
$$M =\begin{pmatrix}
a_1 & a_2 & a_3 & a_4 \\
a_5 & a_6 & a_7 & a_8 \\
a_9 & a_{10} & a_{11} & a_{12} \\
a_{13} & a_{14} & a_{15} & a_{16} \\
\end{pmatrix}. $$
Since $H(f,\{ 1,2\})$ is the $K$-dimension of 
\begin{align*}
 \Span _K & \{a_1z_1w_1 + a_2z_1w_2 + a_3z_2w_1 + a_4z_2w_2,  a_5 z_1w_1 + a_6 z_1w_2 + a_7z_2w_1 + a_8z_2w_2, \\
 & a_9 z_1w_1 + a_{10} z_1w_2 + a_{11}z_2w_1 + a_{12}z_2w_2 ,  a_{13} z_1w_1 + a_{14} z_1w_2 + a_{15}z_2w_1 + a_{16}z_2w_2 \},
\end{align*}
$H(f,\{ 1,2\})=4$ if and only if $M$ is a nonsingular matrix.
Let
$$ 
N =\begin{pmatrix}
a_1 & a_2 & a_5 & a_6 \\
a_3 & a_4 & a_7 & a_8 \\
a_9 & a_{10} & a_{13} & a_{14} \\
a_{11} & a_{12} & a_{15} & a_{16} \\
\end{pmatrix} \text{ and }
P =\begin{pmatrix}
a_1 & a_3 & a_5 & a_7 \\
a_2 & a_4 & a_6 & a_8 \\
a_9 & a_{11} & a_{13} & a_{15} \\
a_{10} & a_{12} & a_{14} & a_{16} \\
\end{pmatrix}.
$$
A similar computation shows that $N$ (resp. $P$) is a non-singular matrix if and only if $ H(f,\{ 1,3\}) = 4$ (resp. $ H(f,\{ 1,4\}) = 4$). Moreover, we have $\det(M) - \det(N) =\det(P).$ It follows that one of $\det (M), \det (N),$ and $ \det (P)$ must be zero. So, $f$ is not a balanced neighborly polynomial of type $(2,2,2,2)$.
\end{proof}
For an integer $i$, let $[i]_k \in \{ 1,2,. \ldots, k \} $ be the number such that 
$[i]_k \equiv i(\text{mod} $ $k).$ 
\begin{theorem} \label{bd1}
Let $k \in \mathbb{N}$ be odd and $f = \sum\limits_{1 \leq i,j \leq k} x_iy_jz_{{[j-i]}_k} w_{{[i+j]}_k}.$ Then $f$ is a balanced neighborly polynomial of type $(k,k,k,k).$ 
\end{theorem}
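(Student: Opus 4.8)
The plan is to compute the three quantities $H(f,\{1,2\})$, $H(f,\{1,3\})$, $H(f,\{1,4\})$ directly and show each equals $k^2$; by Lemma \ref{2.2}(ii) these are the only subsets $S\subseteq[4]$ with $|S|\le 2$ that need checking (together with $S=\emptyset$ and singletons, which are trivial since $f$ clearly involves every variable). Since $f=\sum_{1\le i,j\le k} x_iy_jz_{[j-i]_k}w_{[i+j]_k}$, the partial derivative $\partial_{x_a}\partial_{y_b} f$ equals the single monomial $z_{[b-a]_k}w_{[a+b]_k}$. So $H(f,\{1,2\})$ is the number of distinct pairs $(z_{[b-a]_k},w_{[a+b]_k})$ as $(a,b)$ ranges over $[k]^2$, i.e. the size of the image of the map $[k]^2\to(\mathbb Z/k)^2$, $(a,b)\mapsto(b-a,\,a+b)$. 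This map is injective precisely when the matrix $\begin{pmatrix}-1&1\\1&1\end{pmatrix}$ is invertible over $\mathbb Z/k$, which holds iff $\det=-2$ is a unit mod $k$; since $k$ is odd, $2$ is a unit, so $H(f,\{1,2\})=k^2$.

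Next I would handle $H(f,\{1,3\})$ and $H(f,\{1,4\})$, which are slightly more delicate because differentiating in $x$ and $z$ (or $x$ and $w$) does not pin down both remaining indices in one monomial. For $H(f,\{1,3\})$: $\partial_{x_a}\partial_{z_c} f=\sum_{j:\,[j-a]_k=c} y_j w_{[a+j]_k}$, and since $k$ is invertible the congruence $j\equiv a+c$ has the unique solution $j=[a+c]_k$, so again $\partial_{x_a}\partial_{z_c}f$ is a single monomial $y_{[a+c]_k}w_{[2a+c]_k}$. Thus $H(f,\{1,3\})$ counts the image of $(a,c)\mapsto(a+c,\,2a+c)$ in $(\mathbb Z/k)^2$, governed by the determinant $\det\begin{pmatrix}1&1\\2&1\end{pmatrix}=-1$, a unit, so $H(f,\{1,3\})=k^2$. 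Similarly $\partial_{x_a}\partial_{w_e}f$: the condition $[a+j]_k=e$ forces $j=[e-a]_k$, giving the single monomial $y_{[e-a]_k}z_{[e-2a]_k}$, so $H(f,\{1,4\})$ counts the image of $(a,e)\mapsto(e-a,\,e-2a)$, with determinant $\det\begin{pmatrix}-1&1\\-2&1\end{pmatrix}=1$, again a unit, yielding $H(f,\{1,4\})=k^2$.

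Putting these together, every required $H(f,S)$ equals $k^{|S|}=\prod_{i\in S}k$, so $f$ is balanced neighborly of type $(k,k,k,k)$. The only genuine subtlety — and the one place where oddness of $k$ is essential — is the repeated use of invertibility of $2$ modulo $k$ (for the $\{1,2\}$ computation) and, more implicitly, that the relevant $2\times 2$ integer matrices have determinant $\pm 1$ or $\pm 2$; I would state the counting lemma "the image of $v\mapsto Av$ on $(\mathbb Z/k)^2$ has size $k^2$ iff $\det A$ is a unit mod $k$" once and apply it three times rather than redoing the bookkeeping. A minor care point is verifying that for each of the three pairs of variables the partial derivative really collapses to a \emph{single} monomial (not a sum), which is exactly where the unique-solvability of a linear congruence mod $k$ comes in; this is where I would be most careful to avoid an off-by-one or a spurious coincidence of monomials.
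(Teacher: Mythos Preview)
Your proposal is correct and follows essentially the same route as the paper. Both reduce each $H(f,S)$ to the injectivity of a linear map $(\mathbb{Z}/k)^2\to(\mathbb{Z}/k)^2$; the paper verifies injectivity by a bare-hands congruence argument and handles $\{1,3\}$ and $\{1,4\}$ by reindexing the sum for $f$, whereas you phrase it uniformly via the determinant criterion and compute the partial derivatives directly --- a cosmetic difference only.
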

\begin{proof}
 Set $A = \{ z_{[j-i]_k} w_{[j+i]_k} \mid i ,j = 1,2, \ldots, k \}$. We consider the map 
 \begin{align*}
 \varphi: \{ 1,2, \ldots, k\} ^2 &\rightarrow  \{ 1,2, \ldots, k\} ^2 \\
 (i,j) &\mapsto ([j-i]_k, [j+i]_k).
 \end{align*}
We prove that $\varphi$ is bijective. It suffices to prove the injectivity.  Suppose $([j-i]_k, [j+i]_k) = ([j'-i']_k, [j'+i']_k)$, where $(i,j), (i',j') \in  \{ 1,2, \ldots, k\} ^2 $. Then we have 
$j-i \equiv j'-i' (\text{mod} $ $k)$,  $j+i \equiv j'+i' (\text{mod} $ $k)$. It follows that $2j \equiv 2j'(\text{mod} $ $k)$ and $2i \equiv 2i'(\text{mod} $ $k)$. Since $k$ is odd,  $j \equiv j'(\text{mod} $ $k)$ and $i \equiv i'(\text{mod} $ $k)$. Then $(i,j) = (i',j')$ so, $\varphi$ is an injective.
The bijectivity of $\varphi$ implies that $A = \{ z_i w_j \mid i ,j = 1,2, \ldots, k \}$. Hence
$$H(f,\{ 1,2\})  =\dim_K \Span_K A = k^2.$$
In order to compute $\dim_K R_{\e_1+\e_3} $ and $\dim_K R_{\e_1+\e_4}$, we rewrite $f$ as follows 
$$f = \sum\limits_{1 \leq i,j \leq k} x_iy_{[i+j]_k}z_j w_{{[2i+j]}_k} = \sum\limits_{1 \leq i,j \leq k} x_iy_{[j-i]_k}z_{[j-2i]_k} w_j.$$
By a similar argument as computing $H(f,\{ 1,2\})$, we have 
$$H(f,\{ 1,3\}) = \dim_K \Span _K \{ y_{[i+j]_k} w_{{[2i+j]}_k} \mid i,j=1 , \ldots, k \} = k^2, $$
$$H(f,\{ 1,4\}) = \dim_K \Span _K \{ y_{[j-i]_k}z_{{[j-2i]}_k} \mid i,j=1,\ldots, k \} = k^2.$$
So, $f$ is a balanced neighborly polynomial of type $(k,k,k,k)$.
\end{proof}
From now on we assume that $k$ is even. Let $\overline{x}_i = x_{i+ \frac{k}{2}}, $ $ \overline{y}_i = y_{i+ \frac{k}{2}}$, $\overline{z}_{i = z_i+ \frac{k}{2}}$, $\overline{w}_i = w_{i+ \frac{k}{2}}$ for $i = 1,2, \ldots, \frac{k}{2}.$ 
\begin{theorem} \label{bd2}
Suppose $k =4m,$ with $ m \in \mathbb{N}$. Let
\begin{align*}
f &= \sum\limits_{1 \leq i,j \leq 2m, i: odd} x_iy_jz_{{[i+j-1]}_{2m}} w_{{[\frac{i-1}{2} +j]}_{2m}} 
   &  + \sum\limits_{1 \leq i,j \leq 2m, i: odd} \overline{x}_i y_j \overline{z}_{{[i+j-1]}_{2m}} \overline{w}_{{[\frac{i-1}{2} +j+m]}_{2m}} \\
&+  \sum\limits_{1 \leq i,j \leq 2m, i: odd} x_i\overline{y}_j\overline{z}_{{[i+j-1]}_{2m}} \overline{w}_{{[\frac{i-1}{2}+j]}_{2m}}
&+  \sum\limits_{1 \leq i,j \leq 2m, i: odd} \overline{x}_i\overline{y}_jz_{{[i+j-1]}_{2m}} w_{{[\frac{i-1}{2}+j+m]}_{2m}} \\
& +  \sum\limits_{1 \leq i,j \leq 2m, i: even} x_iy_jz_{{[i+j-1]}_{2m}} \overline{w}_{{[\frac{i-2}{2}+j]}_{2m}}
&+  \sum\limits_{1 \leq i,j \leq 2m, i: even} \overline{x}_iy_j\overline{z}_{{[i+j-1]}_{2m}} w_{{[\frac{i-2}{2}+j + m]}_{2m}}\\
&+  \sum\limits_{1 \leq i,j \leq 2m, i: even} x_i\overline{y}_j\overline{z}_{{[i+j-1]}_{2m}} w_{{[\frac{i-2}{2}+j]}_{2m}}
&+  \sum\limits_{1 \leq i,j \leq 2m, i: even} \overline{x}_i\overline{y}_jz_{{[i+j-1]}_{2m}} \overline{w}_{{[\frac{i-2}{2}+j+m]}_{2m}}.
\end{align*}
 Then $f$ is a balanced neighborly polynomial of type $(k,k,k,k).$ 
\end{theorem}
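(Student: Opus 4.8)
The plan is to reduce everything to three bijectivity statements. By the definition of neighborliness of type $(k,k,k,k)$ only the subsets $S\subseteq[4]$ with $|S|\le 2$ matter, the case $S=\emptyset$ is trivial, and by Lemma \ref{2.2}(ii) the pairs $\{3,4\},\{2,4\},\{2,3\}$ pair up with $\{1,2\},\{1,3\},\{1,4\}$; so it suffices to prove $H(f,\{1,t\})=k^2$ for $t=2,3,4$ and $H(f,\{t\})=k$ for $t=1,2,3,4$. I would deduce all seven equalities from one structural fact. First note that the eight double sums contribute $8\cdot 2m^2=k^2$ monomials, all distinct: the pattern of barred/unbarred variables of a monomial identifies which of the eight sums produced it (the eight patterns are pairwise different), and inside a sum the pair $(i,j)$ is recovered from the indices of two of the variables. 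Hence $f=\sum_{(i,j)\in[k]^2}x_iy_jz_{a(i,j)}w_{b(i,j)}$ for well-defined maps $a,b\colon[k]^2\to[k]$. Granting that $\varphi_{12}\colon(i,j)\mapsto(a(i,j),b(i,j))$ is a bijection of $[k]^2$, one gets $\partial_{x_i}\partial_{y_j}f=z_{a(i,j)}w_{b(i,j)}$, so the spanning set in the displayed formula for $H(f,\{1,2\})$ consists of $k^2$ distinct monomials and $H(f,\{1,2\})=k^2$; moreover $\partial_{x_i}f=\sum_j y_jz_{a(i,j)}w_{b(i,j)}$ have pairwise disjoint monomial supports by the injectivity of $\varphi_{12}$, so $H(f,\{1\})=k$, and symmetrically $H(f,\{2\})=k$. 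The analogous bijections for the pairings $(x,z)$ and $(x,w)$ then yield $H(f,\{1,3\}),H(f,\{1,4\}),H(f,\{3\}),H(f,\{4\})$.

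For the pairings $(x,z)$ and $(x,w)$ I would, as in the rewriting step of the proof of Theorem \ref{bd1}, first re-expand $f$ so that the $z$-index (resp. the $w$-index) becomes the outer summation variable: in each of the eight sums the $z$-index is $[i+j-1]_{2m}$, possibly barred, which for fixed $i$ is a bijection in $j$ on $\{1,\dots,2m\}$, so inverting $j$ recasts $f$ as $\sum x_iz_j(\cdots)$ (resp. $\sum x_iw_j(\cdots)$). After that, each of the three bijectivity claims is handled uniformly: group the $k^2$ monomials by the barred/unbarred pattern of the two paired indices and by the parity of the summation index $i$; this pins down the sum and the pair $(i,j)$, so the candidate map is indeed a map $[k]^2\to[k]^2$, and the two remaining indices are explicit affine-linear functions of $(i,j)$ modulo $2m$ lifted into the appropriate (barred or unbarred) block. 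To see injectivity I would use that in every sum and every pairing the difference of the two free indices is congruent modulo $2m$ to $\tfrac{i-1}{2}$ or $\tfrac{i-2}{2}$, possibly shifted by $m$; as $i$ runs over one parity class in $\{1,\dots,2m\}$ this difference runs injectively over a length-$m$ window of residues, so it recovers $i$, after which the remaining index recovers $j$. This is exactly the elimination $2i\equiv\cdots$, $2j\equiv\cdots$ used for $\varphi$ in Theorem \ref{bd1}, now split according to the parity of $i$ because here $k/2=2m$ is even.

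The conceptual content is light — three runs of the same bijectivity check — but the bookkeeping is where the real work lies, and I expect it to be the main obstacle. For a fixed pairing one must verify that the images of the eight sums tile $[k]^2$ with no overlap: each of the four barred/unbarred patterns of the two paired indices is realized by exactly two of the eight sums, separated by the parity of $i$, each contributing a region of size $2m^2$, and it is precisely the extra ``$+m$'' shifts occurring in four of the sums that make the two half-regions disjoint, so that their union is the expected $4m^2$-subset of $[k]^2$ and the four unions partition $[k]^2$. Checking this disjointness-and-covering simultaneously for the three pairings $(x,y)$, $(x,z)$, $(x,w)$ is the delicate point; once it is in place the seven equalities above hold, and $f$ is a balanced neighborly polynomial of type $(k,k,k,k)$.
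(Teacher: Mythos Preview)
Your proposal is correct and follows essentially the same approach as the paper: both reduce to showing that for each of the three pairings $\{1,2\},\{1,3\},\{1,4\}$ the $k^2$ second partial derivatives are distinct monomials, and both verify this by grouping the eight sums according to bar patterns, then checking injectivity of the residual index maps $(i,j)\mapsto(\cdot,\cdot)$ via an affine-difference computation modulo $2m$ (exactly as in Theorem~\ref{bd1}). The paper phrases this as the disjointness of four sets $B_1,B_1',B_2,B_2'$ (and analogues $B_3,\dots,B_4'$ after the same change of summation variable $j\mapsto[i+j-1]_{2m}$ that you describe), while you phrase it as a single bijection $[k]^2\to[k]^2$; these are the same computation. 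Your additional check of $H(f,\{t\})=k$ for $|S|=1$ is harmless but unnecessary, since $H(f,\{1,2\})=k^2$ already forces the $k$ first partials $\partial_{x_i}f$ to be linearly independent.
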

\begin{proof}
Set 
\begin{align*}
B_1: =&\{z_{{[i+j-1]}_{2m}} w_{{[\frac{i-1}{2} +j]}_{2m}},  \overline{z}_{{[i+j-1]}_{2m}} \overline{w}_{{[\frac{i-1}{2}+j]}_{2m}} \mid i,j = 1, 2, \ldots, 2m, i: odd\},\\
B'_1: =&\{\overline{z}_{{[i+j-1]}_{2m}} \overline{w}_{{[\frac{i-1}{2} +j+m]}_{2m}} , z_{{[i+j-1]}_{2m}} w_{{[\frac{i-1}{2}+j+m]}_{2m}}\mid i,j = 1, 2, \ldots, 2m, i: odd\},\\
B_2:=& \{ z_{{[i+j-1]}_{2m}} \overline{w}_{{[\frac{i-2}{2}+j]}_{2m}}, \overline{z}_{{[i+j-1]}_{2m}} w_{{[\frac{i-2}{2}+j]}_{2m}} \mid i,j = 1, 2, \ldots, 2m, i: even\}, \\
B'_2:=& \{  \overline{z}_{{[i+j-1]}_{2m}} w_{{[\frac{i-2}{2}+j + m]}_{2m}}, z_{{[i+j-1]}_{2m}} \overline{w}_{{[\frac{i-2}{2}+j+m]}_{2m}}  \mid i,j = 1, 2, \ldots, 2m, i: even\}. 
\end{align*}
Then $B_1$, $B'_1$, $B_2$ and $B'_2$ are pairwise disjoint. Hence
 $$ |B_1 \cup B'_1 \cup B_2  \cup B'_2 | =|B_1|+ | B'_1| + |B_2| +| B'_2 |.$$
 In the same way as the proof of Theorem \ref{bd1}, one can prove the injectivity of the following maps:
\begin{align*}
\{ 1,3, \ldots, 2m-1\} \times \{ 1,2, \ldots, 2m\} &\to \{ 1,2, \ldots, 2m\}^2\\
 (i,j) &\mapsto ([i+j-1]_{2m}, [\frac{i-1}{2}+j]_{2m}),\\
 (i,j) &\mapsto ([i+j-1]_{2m}, [\frac{i-1}{2}+j+m]_{2m})
 \end{align*}
 and
\begin{align*}
\{ 2,4, \ldots, 2m\} \times \{ 1,2, \ldots, 2m\} &\to \{ 1,2, \ldots, 2m\}^2\\
 (i,j) &\mapsto ([i+j-1]_{2m}, [\frac{i-2}{2}+j]_{2m}),\\
 (i,j) &\mapsto ([i+j-1]_{2m}, [\frac{i-2}{2}+j+m]_{2m}).
\end{align*}
It follows that $|B_1| = |B'_1|= |B_2| = |B'_2| = 4m^2$. Therefore
$$ H(f,\{ 1,2\}) =\dim_K \Span _K (B_1 \cup B'_1 \cup B_2  \cup B'_2)= (4m)^2 = k^2.$$
In order to compute $\dim_K R_{\e_1 + \e_3}$, we can rewrite $f$ as follows \begin{align*}
f &= \sum\limits_{1 \leq i,j \leq 2m, i: odd} x_iy_{{[j+1-i]}_{2m}} z_jw_{{[j-\frac{i-1}{2}]}_{2m}} 
   &  + \sum\limits_{1 \leq i,j \leq 2m, i: odd} \overline{x}_i y_{[j+1-i]_{2m}} \overline{z}_j \overline{w}_{{[j-\frac{i-1}{2} +m]}_{2m}} \\
&+  \sum\limits_{1 \leq i,j \leq 2m, i: odd} x_i\overline{y}_{[j+1-i]_{2m}}\overline{z}_j \overline{w}_{{[j-\frac{i-1}{2}]}_{2m}}
&+  \sum\limits_{1 \leq i,j \leq 2m, i: odd} \overline{x}_i\overline{y}_{[j+1-i]_{2m}}z_j w_{{[j-\frac{i-1}{2}+m]}_{2m}} \\
& +  \sum\limits_{1 \leq i,j \leq 2m, i: even} x_iy_{[j+1-i]_{2m}}z_j \overline{w}_{{[j-\frac{i}{2}]}_{2m}}
&+  \sum\limits_{1 \leq i,j \leq 2m, i: even} \overline{x}_iy_{[j+1-i]_{2m}}\overline{z}_j w_{{[j-\frac{i}{2} + m]}_{2m}}\\
&+  \sum\limits_{1 \leq i,j \leq 2m, i: even} x_i\overline{y}_{[j+1-i]_{2m}}\overline{z}_j w_{{[j-\frac{i}{2}]}_{2m}}
&+  \sum\limits_{1 \leq i,j \leq 2m, i: even} \overline{x}_i\overline{y}_{[j+1-i]_{2m}}z_j \overline{w}_{{[j-\frac{i}{2}+m]}_{2m}}.
\end{align*}
Set
\begin{align*}
B_3:=&\{ y_{{[j+1-i]}_{2m}} w_{{[j-\frac{i-1}{2}]}_{2m}},  \overline{y}_{{[j+1-i]}_{2m}} \overline{w}_{{[j-\frac{i-1}{2}]}_{2m}}  \mid i,j = 1, 2, \ldots, 2m, i: odd\},\\
B'_3:=&\{y_{{[j+1-i]}_{2m}} \overline{w}_{{[j-\frac{i-1}{2}+m]}_{2m}},  \overline{y}_{{[j+1-i]}_{2m}} w_{{[j-\frac{i-1}{2}+m]}_{2m}} \mid i,j = 1, 2, \ldots, 2m, i: odd\},\\
 B_4:= & \{ y_{{[j+1-i]}_{2m}} \overline{w}_{{[j-\frac{i}{2}]}_{2m}}, \overline{y}_{{[j+1-i]}_{2m}} w_{{[j-\frac{i}{2}]}_{2m}}  \mid i,j = 1, 2, \ldots, 2m, i: even \},\\
  B'_4:= & \{  y_{{[j+1-i]}_{2m}} w_{{[j-\frac{i}{2}+ m]}_{2m}}, \overline{y}_{{[j+1-i]}_{2m}} \overline{w}_{{[j-\frac{i}{2}+m]}_{2m}}  \mid i,j = 1, 2, \ldots, 2m, i: even \}.
\end{align*}
Then $H(f,\{ 1,3\}) = \dim \Span _K (B_3 \cup B'_3 \cup B_4 \cup B'_4)  $.
A similar argument as computing $H(f,\{ 1,2\})$, we have $H(f,\{ 1,3\}) =  (4m)^2= k^2.$
Similarly, we can rewrite $f$ as follows \begin{align*}
f &= \sum\limits_{1 \leq i,j \leq 2m, i: odd} x_iy_{[j-\frac{i-1}{2}]_{2m}} z_{[j+\frac{i-1}{2}]_{2m}}w_j 
   &  + \sum\limits_{1 \leq i,j \leq 2m, i: odd} \overline{x}_i y_{[j-\frac{i-1}{2}-m]_{2m}} \overline{z}_{[j+\frac{i-1}{2}-m]_{2m}} \overline{w}_j \\
&+  \sum\limits_{1 \leq i,j \leq 2m, i: odd} x_i\overline{y}_{[j-\frac{i-1}{2}]_{2m}}\overline{z}_{[j+\frac{i-1}{2}]_{2m}} \overline{w}_j
&+  \sum\limits_{1 \leq i,j \leq 2m, i: odd} \overline{x}_i\overline{y}_{[j-\frac{i-1}{2}-m]_{2m}}z_{[j+\frac{i-1}{2}-m]_{2m}} w_j \\
& +  \sum\limits_{1 \leq i,j \leq 2m, i: even} x_iy_{[j-\frac{i-2}{2}]_{2m}}z_{[j+\frac{i}{2}]_{2m}} \overline{w}_j
&+  \sum\limits_{1 \leq i,j \leq 2m, i: even} \overline{x}_iy_{[j-\frac{i-2}{2}-m]_{2m}}\overline{z}_{[j+\frac{i}{2}-m]_{2m}} w_j\\
&+  \sum\limits_{1 \leq i,j \leq 2m, i: even} x_i\overline{y}_{[j-\frac{i-2}{2}]_{2m}}\overline{z}_{[j+\frac{i}{2}]_{2m}} w_j
&+  \sum\limits_{1 \leq i,j \leq 2m, i: even} \overline{x}_i\overline{y}_{[j-\frac{i-2}{2}-m]_{2m}}z_{[j+\frac{i}{2}-m]_{2m}} \overline{w}_j.
\end{align*}

By similar computing as above, we get that $ H(f,\{ 1,4\}) = (4m)^2= k^2.$ So, $f$ is a balanced neighborly polynomial of type $(k,k,k,k)$.
\end{proof}
\begin{theorem} \label{bd3}
Suppose $k =4m + 2,$ with $ m \in \mathbb{N}$. Let 
\begin{align*}
f &= \sum\limits_{1 \leq i,j \leq 2m+1, i: odd} x_iy_jz_{{[i+j-1]}_{2m+1}} w_{{[\frac{i-1}{2} +j]}_{2m+1}} 
   &  + \sum\limits_{1 \leq i,j \leq 2m+1, i: odd} \overline{x}_i y_j \overline{z}_{{[i+j-1]}_{2m+1}} \overline{w}_{{[\frac{i-1}{2} +j+m]}_{2m+1}} \\
&+  \sum\limits_{1 \leq i,j \leq 2m+1, i: odd} x_i\overline{y}_j\overline{z}_{{[i+j-1]}_{2m+1}} \overline{w}_{{[\frac{i-1}{2}+j]}_{2m+1}}
&+  \sum\limits_{1 \leq i,j \leq 2m+1, i: odd} \overline{x}_i\overline{y}_jz_{{[i+j-1]}_{2m+1}} w_{{[\frac{i-1}{2}+j+m]}_{2m+1}} \\
& +  \sum\limits_{1 \leq i,j \leq 2m+1, i: even} x_iy_jz_{{[i+j-1]}_{2m+1}} \overline{w}_{{[\frac{i-2}{2}+j]}_{2m+1}}
&+  \sum\limits_{1 \leq i,j \leq 2m+1, i: even} \overline{x}_iy_j\overline{z}_{{[i+j-1]}_{2m+1}} w_{{[\frac{i}{2}+j + m]}_{2m+1}}\\
&+  \sum\limits_{1 \leq i,j \leq 2m+1, i: even} x_i\overline{y}_j\overline{z}_{{[i+j-1]}_{2m+1}} w_{{[\frac{i-2}{2}+j]}_{2m+1}}
&+  \sum\limits_{1 \leq i,j \leq 2m+1, i: even} \overline{x}_i\overline{y}_jz_{{[i+j-1]}_{2m+1}} \overline{w}_{{[\frac{i}{2}+j+m]}_{2m+1}}\\
& + \sum\limits_{1 \leq j \leq 2m+1} \overline{x}_{2m+1} y_j \overline{z}_j w_j + \sum\limits_{1 \leq j \leq 2m+1}\overline{x}_{2m+1}\overline{y}_jz_j \overline{w}_j .
\end{align*}
 Then $f$ is a balanced neighborly polynomial of type $(k,k,k,k).$ 
\end{theorem}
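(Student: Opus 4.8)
The plan is to follow the same template as the proofs of Theorems \ref{bd1} and \ref{bd2}: show that for each of the three pairs $S \in \{\{1,2\},\{1,3\},\{1,4\}\}$ the set of partial derivatives $\frac{\partial}{\partial x_i}\frac{\partial}{\partial y_j} f$ (and its analogues after rewriting $f$ by solving for different variable pairs) spans a space of dimension $k^2 = (4m+2)^2$, which by the definition of neighborliness for $d=4$ (where the only subsets with $|S| \le 2$ are singletons and pairs, and singletons are automatic since $f$ clearly involves all $k$ variables in each block) suffices. So first I would collect the monomials $z_{[i+j-1]_{2m+1}} w_{\ast}$ appearing in $\partial_{x_i}\partial_{y_j} f$, organized into eight families coming from the eight "generic" summands plus the two correction terms $\overline{x}_{2m+1} y_j \overline{z}_j w_j$ and $\overline{x}_{2m+1}\overline{y}_j z_j \overline{w}_j$ involving the single leftover index $2m+1$ in the $\overline{x}$-block.

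Next I would prove the relevant injectivity statements. For the eight generic families, the maps $(i,j)\mapsto ([i+j-1]_{2m+1}, [\frac{i-1}{2}+j+c]_{2m+1})$ on $\{1,3,\dots,2m-1\}\times\{1,\dots,2m+1\}$ (and the even analogue with $\frac{i-2}{2}$ or $\frac{i}{2}$) are injective by the same $2\times 2$ linear-algebra argument over $\mathbb Z/(2m+1)$ used in Theorem \ref{bd1}: from $i+j \equiv i'+j'$ and $\frac{i-1}{2}+j \equiv \frac{i'-1}{2}+j'$ one gets $\frac{i-1}{2}\equiv\frac{i'-1}{2}$, hence $i\equiv i'$ since $2m+1$ is odd, hence $j\equiv j'$; one must check each odd $i\in\{1,\dots,2m-1\}$ has a well-defined $\frac{i-1}{2}\in\{0,\dots,m-1\}$ and each even $i$ a well-defined $\frac{i-2}{2}$ or $\frac i2$. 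This shows each family has exactly $m(2m+1) = 2m^2+m$ distinct monomials, and the four "unbarred-target-$w$" families together with the four "barred-target-$w$" families partition so that one counts $4m(2m+1)$ monomials with $zw$-type target and $4m(2m+1)$ with $z\overline w$ or $\overline z w$ etc.; then I would carefully verify that the eight families are pairwise disjoint (as in the $B_1,B'_1,B_2,B'_2$ bookkeeping of Theorem \ref{bd2}) and that together with the two correction families—which contribute the "missing" monomials $\overline z_j w_j$ and $z_j\overline w_j$ for the index $i=2m+1$ that the generic sums omit—the union is exactly $\{z_a w_b, \overline z_a w_b, z_a \overline w_b, \overline z_a \overline w_b : 1\le a,b\le 2m+1\}$, a set of size $(4m+2)^2 = k^2$.

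Then I would repeat this for $H(f,\{1,3\})$ and $H(f,\{1,4\})$ by rewriting $f$, solving the defining congruences for $z$ in terms of $(i,j)$ (resp. for $w$), exactly as the two alternate expressions for $f$ are produced in Theorem \ref{bd2}; the correction terms $\overline{x}_{2m+1} y_j \overline z_j w_j$ and $\overline{x}_{2m+1}\overline y_j z_j \overline w_j$ are already "diagonal" in $j$, so after differentiating in $x$ and $z$ (resp. $x$ and $w$) they again supply precisely the monomials that the main eight families leave out, and the same disjointness-plus-completeness argument gives $k^2$.

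The main obstacle I expect is the bookkeeping in the case $k=4m+2$: because $2m+1$ is odd but $k$ is even, the "half-block" $\{1,\dots,2m+1\}$ of first indices $i$ splits unevenly into $m$ odd values $\{1,3,\dots,2m-1\}$, $m$ even values $\{2,4,\dots,2m\}$, and the single leftover $i=2m+1$, and it is exactly to absorb this leftover that the two correction terms are inserted. Verifying that these corrections land in the complement of the eight generic families—and in the right blocks ($\overline z w$ versus $z \overline w$) so that every one of the four blocks $zw,\overline zw,z\overline w,\overline z\overline w$ ends up with exactly $(2m+1)^2$ monomials—requires keeping careful track of the parity shifts by $+m$ in the $w$-subscripts. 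Once the disjointness and the exact count $8\cdot m(2m+1) + 2\cdot(2m+1) = (2m+1)(8m+2) = 2(2m+1)^2\cdot\frac{8m+2}{2(2m+1)}$... more simply $8m(2m+1)+2(2m+1) = (2m+1)(8m+2) = (4m+2)^2/ $ checks out to $k^2$, the spanning dimensions follow immediately and $f$ is balanced neighborly of type $(k,k,k,k)$.
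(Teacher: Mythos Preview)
Your overall strategy---compute $H(f,\{1,2\})$, $H(f,\{1,3\})$, $H(f,\{1,4\})$ by listing the derivatives and verifying they span a $k^2$-dimensional space---is exactly the paper's approach. But your execution has a real gap in how it handles the correction terms, and your arithmetic at the end does not close.

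First, the parity count is wrong: the odd values of $i$ in $\{1,\dots,2m+1\}$ are $\{1,3,\dots,2m+1\}$, which is $m+1$ numbers, not $m$; the index $i=2m+1$ is \emph{not} a ``leftover'' omitted from the generic sums---it is included in all four odd-$i$ sums. Consequently your tally $8\cdot m(2m+1)+2(2m+1)=(2m+1)(8m+2)$ equals $16m^2+12m+2$, which is \emph{not} $(4m+2)^2=16m^2+16m+4$; you are short by $2(2m+1)$ monomials.

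Second, and more importantly, the correction terms do not simply ``supply missing monomials.'' Since $\overline{x}_{2m+1}$ already occurs in the second and fourth odd-$i$ sums (with $i=2m+1$), the derivative $\partial_{\overline{x}_{2m+1}}\partial_{y_j}f$ is a \emph{sum of two monomials}, namely $\overline z_{[2m+j]_{2m+1}}\overline w_{[2m+j]_{2m+1}}+\overline z_j w_j$, and similarly for $\partial_{\overline{x}_{2m+1}}\partial_{\overline y_j}f$. The paper handles this by observing that the first summand (a diagonal $\overline z_a\overline w_a$, resp.\ $z_a w_a$) already lies in the monomial family coming from $i=1$ in the first/third sums; hence, modulo that family, these two-term derivatives may be replaced by the single monomials $\overline z_j w_j$ and $z_j\overline w_j$. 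Only after this span reduction do you get pairwise disjoint monomial families whose cardinalities are $2(m+1)(2m+1)$, $2m(2m+1)$, $2m(2m+1)$, $2m(2m+1)$, and $2(2m+1)$, summing to $(4m+2)^2$. You need this step (and its analogue for $\{1,3\}$ and $\{1,4\}$); without it the argument does not go through.
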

\begin{proof}
Set 
\begin{align*}
C_1: = & \{ z_{{[i+j-1]}_{2m+1}} w_{{[\frac{i-1}{2} +j]}_{2m+1}}  ,\overline{z}_{{[i+j-1]}_{2m+1}} \overline{w}_{{[\frac{i-1}{2}+j]}_{2m+1}}  \mid i,j = 1, 2, \ldots, 2m+1, i: odd\} ,\\
  C'_1: = & \{ \overline{z}_{{[i+j-1]}_{2m+1}} \overline{w}_{{[\frac{i-1}{2} +j+m]}_{2m+1}} , z_{{[i+j-1]}_{2m+1}} w_{{[\frac{i-1}{2}+j+m]}_{2m+1}}   \mid i,j = 1, 2, \ldots, 2m+1, i: odd, i \neq 2m+1\}, \\
  C_2:=& \{ z_{{[i+j-1]}_{2m+1}} \overline{w}_{{[\frac{i-2}{2}+j]}_{2m+1}} , \overline{z}_{{[i+j-1]}_{2m+1}} w_{{[\frac{i-2}{2}+j]}_{2m+1}} \mid i,j = 1, 2, \ldots, 2m+1, i: even\},\\
   C'_2:=& \{  \overline{z}_{{[i+j-1]}_{2m+1}} w_{{[\frac{i}{2}+j + m]}_{2m+1}} , z_{{[i+j-1]}_{2m+1}} \overline{w}_{{[\frac{i}{2}+j+m]}_{2m+1}}  \mid i,j = 1, 2, \ldots, 2m+1, i: even\},\\
 C_3:=& \{ \overline{z}_j w_j + \overline{z}_{[2m+j]_{2m+1}}\overline{w}_{[2m+j]_{2m+1}} ,z_j \overline{w}_j + z_{[2m+j]_{2m+1}}w_{[2m+j]_{2m+1}}\mid j = 1, 2, \ldots, 2m+1 \}, \\
  C'_3:=& \{ \overline{z}_j w_j  ,z_j \overline{w}_j \mid j = 1, 2, \ldots, 2m+1 \}. 
\end{align*}
  We can see that $\overline{z}_{[2m+j]_{2m+1}} \overline{w}_{[2m+j]_{2m+1}} ,z_{[2m+j]_{2m+1}}w_{[2m+j]_{2m+1}}$ belong to $C_1$. Thus
 $$\Span (C_1 \cup C'_1 \cup C_2 \cup C'_2 \cup C_3) = \Span (C_1 \cup C'_1 \cup C_2 \cup C'_2 \cup C'_3).$$
   Moreover, $\overline{z}_j w_j,z_j \overline{w}_j$ do not belong to  $C_2 \cup C'_2 $, so $C_1, C'_1, C_2, C'_2, C'_3 $ are pairwise disjoint. Hence $|C_1 \cup C'_1 \cup C_2 \cup C'_2 \cup C'_3| = |C_1|+ |C'_1| + |C_2|+ |C'_2|+ |C'_3|$.
In the same way as in the proof of Theorem \ref{bd2}, one can show
  $| C_1| = 2(m+1)(2m+1)$, $| C'_1| = 2m(2m+1)$, $|C_2|=|C'_2| = 2m(2m+1)$ and $|C'_3| = 2(2m+1)$. Then $|C_1 \cup C'_1 \cup C_2 \cup C'_2 \cup C'_3 | = (4m+2)^2= k^2$.   Therefore,
  $$ H(f,\{ 1,2\})= \dim_K \Span _K C_1 \cup C'_1 \cup C_2 \cup C'_2 \cup C'_3 = (4m+2)^2 = k^2.$$ 
On the other hand, we can rewrite $f$ as follows 
\begin{align*}
f &=  \sum\limits_{1 \leq i,j \leq 2m+1, i: odd} x_iy_{{[j+1-i]}_{2m+1}} z_jw_{{[j-\frac{i-1}{2}]}_{2m+1}} 
   &  + \sum\limits_{1 \leq i,j \leq 2m+1, i: odd} \overline{x}_i y_{[j+1-i]_{2m+1}} \overline{z}_j \overline{w}_{{[j-\frac{i-1}{2} +m]}_{2m+1}} \\
&+  \sum\limits_{1 \leq i,j \leq 2m+1, i: odd} x_i\overline{y}_{[j+1-i]_{2m+1}}\overline{z}_j \overline{w}_{{[j-\frac{i-1}{2}]}_{2m+1}}
&+  \sum\limits_{1 \leq i,j \leq 2m+1, i: odd} \overline{x}_i\overline{y}_{[j+1-i]_{2m+1}}z_j w_{{[j-\frac{i-1}{2}+m]}_{2m+1}} \\
& +  \sum\limits_{1 \leq i,j \leq 2m+1, i: even} x_iy_{[j+1-i]_{2m+1}}z_j \overline{w}_{{[j-\frac{i}{2}]}_{2m+1}}
&+  \sum\limits_{1 \leq i,j \leq 2m+1, i: even} \overline{x}_iy_{[j+1-i]_{2m+1}}\overline{z}_j w_{{[j-\frac{i-2}{2} + m]}_{2m+1}}\\
&+  \sum\limits_{1 \leq i,j \leq 2m+1, i: even} x_i\overline{y}_{[j+1-i]_{2m+1}}\overline{z}_j w_{{[j-\frac{i}{2}]}_{2m+1}}
&+  \sum\limits_{1 \leq i,j \leq 2m+1, i: even} \overline{x}_i\overline{y}_{[j+1-i]_{2m+1}}z_j\overline{w}_{{[j-\frac{i-2}{2}+m]}_{2m+1}}\\
& + \sum\limits_{1 \leq j \leq 2m+1} \overline{x}_{2m+1} y_j \overline{z}_j w_j + \sum\limits_{1 \leq j \leq 2m+1}\overline{x}_{2m+1}\overline{y}_jz_j \overline{w}_j .
\end{align*}
Set
\begin{align*}
C_4:=&\{ y_{[j+1-i]_{2m+1}} w_{{[j+\frac{1-i}{2} ]}_{2m+1}}, \overline{y}_{[j+1-i]_{2m+1}} \overline{w}_{{[j+\frac{1-i}{2}]}_{2m+1}}  \mid i,j = 1, 2, \ldots, 2m +1, i: odd\} ,\\
  C'_4:=&\{ y_{[j+1-i]_{2m+1}} \overline{w}_{[\frac{1-i}{2} +j+m]_{2m+1}}, \overline{y}_{[j+1-i]_{2m+1}} w_{{[\frac{1-i}{2}+j+m]}_{2m}}  \mid i,j = 1, 2, \ldots, 2m +1, i: odd, i \neq 2m+1\} ,\\
C_5: =& \{ y_{[j+1-i]_{2m+1}} \overline{w}_{{[j-\frac{i}{2}]}_{2m+1}},\overline{y}_{[j+1-i]_{2m+1}} w_{{[j-\frac{i}{2}]}_{2m+1}} \mid i,j = 1, 2, \ldots, 2m +1, i: even\} ,\\
 C'_5: =& \{   y_{[j+1-i]_{2m+1}} w_{{[j-\frac{i-2}{2}+m]}_{2m+1}}, \overline{y}_{[j+1-i]_{2m+1}} \overline{w}_{{[j-\frac{i-2}{2}+m]}_{2m+1}} \mid i,j = 1, 2, \ldots, 2m +1, i: even\} ,\\
 C_6: =&  \{ y_j  w_j +y_{[j+1]_{2m+1}}\overline{w}_j , \overline{y}_j \overline{w}_j + \overline{y}_{[j+1]_{2m+1}} w_j \mid j = 1, 2, \ldots, 2m +1\}. \\
 C'_6: =&  \{ y_j \overline{w}_j , \overline{y}_j w_j  \mid j = 1, 2, \ldots, 2m +1\}. 
\end{align*}
A similar argument as computing $H(f,\{ 1,2\})$, we have $$H(f,\{ 1,3\})= \dim \Span _K (C_4 \cup C'_4 \cup C_5 \cup C'_5 \cup C'_6)=(4m+2)^2 = k^2.$$ Similarly, we can rewrite $f$ as follows 
\begin{align*}
f &=  \sum\limits_{1 \leq i,j \leq 2m+1, i: odd} x_iy_{[j-\frac{i-1}{2}]_{2m+1}}z_{[j+\frac{i-1}{2}]_{2m+1}}w_j 
   & + \sum\limits_{1 \leq i,j \leq 2m+1, i: odd} \overline{x}_i \overline{y}_{[j-\frac{i-1}{2}-m]_{2m+1}}z_{[j+\frac{i-1}{2}-m]_{2m+1}} \overline{w}_j \\
&+ \sum\limits_{1 \leq i,j \leq 2m+1, i: odd} x_i\overline{y}_{[j-\frac{i-1}{2}]_{2m+1}}\overline{z}_{[j+\frac{i-1}{2}]_{2m+1}} \overline{w}_j
&+ \sum\limits_{1 \leq i,j \leq 2m+1, i: odd} \overline{x}_iy_{[j-\frac{i-1}{2}-m]_{2m+1}} \overline{z}_{[j+\frac{i-1}{2}-m]_{2m+1}} w_j \\
& + \sum\limits_{1 \leq i,j \leq 2m+1, i: even} x_i\overline{y}_{[j-\frac{i-1}{2}]_{2m+1}}\overline{z}_{[j+\frac{i-1}{2}]_{2m+1}} \overline{w}_j
&+ \sum\limits_{1 \leq i,j \leq 2m+1, i: even} \overline{x}_iy_{[j-\frac{i-1}{2}-m]_{2m+1}}\overline{z}_{[j+\frac{i-1}{2}-m]_{2m+1}} w_j\\
&+ \sum\limits_{1 \leq i,j \leq 2m+1, i: even} x_iy_{[j-\frac{i-1}{2}]_{2m+1}}z_{[j+\frac{i-1}{2}]_{2m+1}} w_j
&+  \sum\limits_{1 \leq i,j \leq 2m+1, i: even} \overline{x}_i \overline{y}_{[j-\frac{i-1}{2}-m]_{2m+1}}z_{[j+\frac{i-1}{2}-m]_{2m+1}}\overline{w}_j\\
& + \sum\limits_{1 \leq j \leq 2m+1} \overline{x}_{2m+1} y_j \overline{z}_j w_j + \sum\limits_{1 \leq j \leq 2m+1}\overline{x}_{2m+1}\overline{y}_jz_j \overline{w}_j, 
\end{align*}
and we can see $H(f,\{ 1,4\}) = (4m+2)^2 = k^2.$ So, $f$ is a balanced neighborly polynomial of type $(k,k,k,k)$.
\end{proof}

\section{Balanced neighborly polynomials and balanced neighborly spheres}
In this section, we present a relation between balanced neighborly polynomials and balanced neighborly spheres. We first recall basic definitions on simplicial spheres. Let $\Delta$ be a finite simplicial complex on the vertex set $V=\{x_1,x_2,\ldots,x_n\}$.  Thus $\Delta$ is a nonempty collection of subsets of $V$ satisfying that $F \in  \Delta$ and $G \subset F$ imply $G \in \Delta.$ Elements of $\Delta$ are called {\it faces} of $\Delta$ and maximal faces (under inclusion) are called {\it facets.} The dimension of a face $F \in \Delta$ is $\dim F = |F|-1,$ and the {\it dimension of $\Delta$} is the maximum dimension of its faces.  Faces of dimension $0$ are called {\it vertices} and faces of dimension $1$ are called {\it edges.} Let $R=K[x_1, \ldots,x_n]$ be a polynomial ring over an infinite field $K.$ The ring $K[\Delta] = R/I_\Delta$, where $I_\Delta=(\underset{x_i \in F} {\prod} x_i \mid F \subset V, F \notin \Delta)$, is called {\it the Stanley--Reisner ring of $\Delta$.} It is well-known that the Krull dimension of $K[\Delta]$ equals $\dim \Delta + 1$ \cite[II Theorem 1.3]{St2}. If $\dim K[\Delta]=d,$ and a sequence $\Theta = \theta_1, \ldots, \theta_d$ of linear forms such that $\dim_K K[\Delta]/(\Theta) < \infty$ then $\Theta$ is called a {\it linear system of parameters} (l.s.o.p. for short) of $K[\Delta].$ 

A map $\kappa : V \to [d]$ is called {\it a proper $d$-coloring map} of $\Delta,$ if we have  $\kappa(i) \neq \kappa(j)$ for any edge $\{i,j \} \in  \Delta.$  We say that a $(d-1)$-dimensional simplicial complex $\Delta$ on $V$ is {\it balanced} (completely balanced in some literature) if its graph is $d$-colorable. Let $\Delta$ be a $d-1$ dimensional balanced simplicial complex and let $\kappa : V \to [d]$ be a proper $d$-coloring map of $\Delta.$ Set $V_k = \{ v \in V \mid \kappa(v) = k \}.$ Let $\theta_k = \underset {v \in V_k}  {\sum}x_v$ for $k = 1, 2, \ldots, d$. Then $\Theta$ is a l.s.o.p of $K[\Delta]$ \cite[III Proposition 4.3]{St2}. Recall that a {\it simplicial sphere} is a simplicial complex which is homeomorphic to a sphere. That means if $\Delta$ is a  simplicial sphere then $K[\Delta]$ is a Gorenstein ring and $K[\Delta]/(\Theta)$ is an Artinian Gorenstein algebra of socle degree $d$ \cite[ II Corollary 5.2]{St2}. A balanced simplicial sphere of dimension $d-1$ is called  {\it neighborly of type $(n_1, n_2, \ldots, n_d) \in \mathbb{Z}^d$} if $\dim_K (K[\Delta]/(\Theta))_{\e_S}=  \underset {k\in S} {\prod} n_k$ for all $S\subset [d]$.

From now on, we assume that $R=K[x_{i j} \mid 1 \leq i \leq d, 1 \leq j \leq n_i$] and $\Delta$ has vertex set $\{ x_{i j} \mid 1 \leq i \leq d, 1 \leq j \leq n_i\}$.
\begin{pro} \label{4.1}
If a balanced neighborly simplicial sphere of type $(n_1, \ldots, n_d)$ exists, then a balanced neighborly polynomial of type $(n_1, \ldots, n_d)$ exists over any field $K$.
\end{pro}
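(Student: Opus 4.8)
The plan is to obtain the desired $f$ as a dual socle generator of the Artinian reduction of the Stanley--Reisner ring of a balanced neighborly sphere, via Lemma~\ref{2.1}(ii). So fix an arbitrary field $K$, let $\Delta$ be a balanced neighborly simplicial $(d-1)$-sphere of type $(n_1,\dots,n_d)$ on the vertex set $\{x_{ij}\}$ with proper $d$-colouring $\kappa(x_{ij})=i$, and let $\theta_k=\sum_j x_{kj}$, $\Theta=\theta_1,\dots,\theta_d$, be the associated coloured l.s.o.p. I would set $I:=I_\Delta+(\Theta)\subseteq R$, so that $R/I=K[\Delta]/(\Theta)$ and $I$ is $\mathbb{Z}^d$-graded, and then verify the three facts needed to apply Lemma~\ref{2.1}(ii) to $R$ in the role of $\fkR$: (i) $x_{ij}^2\in I$ for all $i,j$; (ii) $R/I$ is Artinian Gorenstein of socle degree $d$; and (iii) $\dim_K(R/I)_{\e_S}=\prod_{k\in S}n_k$ for all $S\subseteq[d]$. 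Granting these, Lemma~\ref{2.1}(ii) produces a squarefree polynomial $f\in R$ of degree $d$ with $\ann(f)=I$, and Lemma~\ref{2.2}(i) then gives $H(f,S)=\dim_K(R/\ann(f))_{\e_S}=\prod_{k\in S}n_k$ for every $S$; after checking that $f$ is genuinely balanced, this says exactly that $f$ is a balanced neighborly polynomial of type $(n_1,\dots,n_d)$.

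The crucial step — and where I expect the real content to lie — is (i). I would compute $x_{ij}\theta_i=x_{ij}^2+\sum_{j'\neq j}x_{ij}x_{ij'}$; since $\kappa$ is a proper colouring and every face of $\Delta$ is a clique, two distinct vertices of the same colour span no edge, so $x_{ij}x_{ij'}\in I_\Delta$ for $j'\neq j$, whence $x_{ij}^2\in(\Theta)+I_\Delta=I$. This observation does double duty: it makes every variable nilpotent modulo $I$, so $R/I$ is automatically finite-dimensional over $K$ — equivalently $\Theta$ is a genuine l.s.o.p.\ over \emph{any} field, which is exactly what permits the base field in the statement to be arbitrary. Given this, (ii) follows from the standard fact that $K[\Delta]$ is Gorenstein for a simplicial sphere over any field (spheres have torsion-free homology) \cite[II Corollary 5.2]{St2}, so $\Theta$ is a regular sequence and $R/I=K[\Delta]/(\Theta)$ is Artinian Gorenstein of socle degree $d$; and for (iii) I would invoke that, for a balanced Cohen--Macaulay complex with its coloured l.s.o.p., the $\mathbb{Z}^d$-graded Hilbert function of $K[\Delta]/(\Theta)$ is the flag $h$-vector of $\Delta$, a combinatorial invariant independent of $K$, so the hypothesis ``$\Delta$ has type $(n_1,\dots,n_d)$'' reads $\dim_K(R/I)_{\e_S}=\prod_{k\in S}n_k$ over every field.

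It then remains to check that the $f$ produced by Lemma~\ref{2.1}(ii) is a balanced polynomial, i.e.\ $\mathbb{Z}^d$-homogeneous of degree $(1,\dots,1)$. Because $x_{ij}^2\in I$, the classes of squarefree monomials span $R/I$, and a squarefree monomial of total degree $d$ surviving modulo $I$ is $x^F$ for a $d$-element face $F$ of $\Delta$, which — $\kappa$ being proper — must use exactly one vertex of each colour, so its $\mathbb{Z}^d$-degree is $\e_{[d]}$; hence $R_{\mathbf a}=I_{\mathbf a}$ for every $\mathbf a\in\mathbb{Z}^d$ of coordinate sum $d$ with $\mathbf a\neq\e_{[d]}$, while $(R/I)_{\e_{[d]}}$ is one-dimensional (the socle). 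Since the construction of $f$ in Lemma~\ref{2.1}(ii) only reads off the degree-$d$ component of $I$, and the linear functional cutting out $(E\cap I)_d$ therefore vanishes on every $\mathbb{Z}^d$-graded piece $E_{\mathbf a}$ with $\mathbf a\neq\e_{[d]}$, the coefficients of $f$ are supported on monomials $x^F$ of $\mathbb{Z}^d$-degree $\e_{[d]}$; that is, $f$ is balanced, and the argument is complete. The one genuinely non-routine point is the identity $x_{ij}^2\in I_\Delta+(\Theta)$: although $K[\Delta]/(\Theta)$ is not \emph{a priori} presented by an ideal containing the squares of the variables, this computation shows that it is, which is precisely what brings Lemma~\ref{2.1}(ii) into play and, as a bonus, dispenses with any hypothesis on $K$.
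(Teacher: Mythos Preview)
Your proof is correct and follows essentially the same route as the paper: realize $K[\Delta]/(\Theta)$ as an Artinian Gorenstein quotient in which every $x_{ij}^2$ vanishes, then invoke Lemma~\ref{2.1}(ii) to produce a squarefree dual socle generator $f$ and read off $H(f,S)$ via Lemma~\ref{2.2}(i). The only difference is cosmetic---the paper first passes to the smaller polynomial ring $R'\cong R/(\Theta)$ and applies Lemma~\ref{2.1}(ii) there (citing \cite[Proposition~4.3]{St1} for the vanishing of squares), whereas you stay in $R$ and compute $x_{ij}^2\in I_\Delta+(\Theta)$ directly; your extra check that the resulting $f$ is genuinely $\mathbb{Z}^d$-homogeneous of degree $\e_{[d]}$ makes explicit a point the paper leaves tacit.
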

\begin{proof}
Set $R'=K[x_{i j} \mid 1 \leq i \leq d, 1 \leq j \leq n_i - 1$]. Then $R/(\Theta) \cong R'$, so 
$$R/ {(I_\Delta +(\Theta))} \cong R'/J \text { for some ideal } J.$$
By Proposition 4.3 in \cite{St1}, we have $x_{i j} x_{i k} = 0$ in $R/ (I_\Delta +(\Theta))$ for all $1\leq i \leq d$ and $1 \leq j \leq k \leq n_i$. It follows that $x_{i j} x_{i k} = 0$ in $S'/ J$ for all $1\leq i \leq d$ and $1 \leq j \leq k \leq n_i-1$. Since $R/(I_\Delta + \Theta)$ is Artinian Gorenstein, Lemma \ref{2.1} shows that there is a balanced polynomial $f\in R$ such that $\ann(f)= J$. This $f$ must be neighborly by Lemma \ref{2.2}, (i).
\end{proof}
By Proposition \ref{4.1} and Theorem \ref{3.1}, we can recover the following a result of Zheng \cite{Zh}, Proposition 6.
\begin{pro}
There are no balanced neighborly simplicial spheres of type $(2,2,2,2).$
\end{pro}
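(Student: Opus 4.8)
The plan is to deduce this corollary directly from the machinery already in place, namely Proposition \ref{4.1} together with Theorem \ref{3.1}. First I would argue by contradiction: suppose a balanced neighborly simplicial sphere $\Delta$ of type $(2,2,2,2)$ exists. Since the excerpt allows $K$ to be any field (and in particular the Stanley--Reisner construction works over any field, here with $d=4$ and each $n_i=2$), I would choose $K$ to be a field of characteristic $2$, for instance $K=\mathbb{F}_2$ or its algebraic closure $\overline{\mathbb{F}_2}$ (the latter if one wants to keep the ``infinite field'' hypothesis used in Section 4 for the l.s.o.p.\ statement). Then by Proposition \ref{4.1}, the existence of such a sphere yields a balanced neighborly polynomial $f$ of type $(2,2,2,2)$ over this field $K$.

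Next I would invoke Theorem \ref{3.1}: a balanced neighborly polynomial of type $(2,2,2,2)$ exists over $K$ if and only if $\Char K\neq 2$. Since we chose $\Char K = 2$, no such polynomial can exist, contradicting the previous paragraph. Therefore the assumed balanced neighborly simplicial sphere of type $(2,2,2,2)$ cannot exist. This is essentially a one-line deduction once Proposition \ref{4.1} and Theorem \ref{3.1} are granted, so the write-up is short.

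The only point that requires a moment of care — and the one I would flag as the ``main obstacle,'' though it is minor — is making sure the hypotheses of Proposition \ref{4.1} and of the surrounding Section 4 setup are compatible with working in characteristic $2$. Proposition \ref{4.1} as stated produces a balanced neighborly polynomial ``over any field $K$,'' so characteristic $2$ is explicitly permitted; the combinatorial input (the existence of $\Delta$) is characteristic-free, and the only place infiniteness of $K$ is used is to guarantee that the colour-induced linear forms $\theta_k$ form an l.s.o.p., which is harmless since we may take $K=\overline{\mathbb{F}_2}$. With that observed, the contradiction is immediate. I would also remark, as the paper already does in the introduction, that this reproves Zheng's result \cite[Proposition 6]{Zh} by a purely algebraic route, the nonexistence being forced by the determinantal identity $\det(M)-\det(N)=\det(P)$ that underlies Theorem \ref{3.1}.
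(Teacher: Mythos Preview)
Your proposal is correct and follows exactly the paper's own approach: the paper simply states that the result follows from Proposition~\ref{4.1} together with Theorem~\ref{3.1}. Your extra care in choosing an infinite field of characteristic~$2$ (e.g.\ $\overline{\mathbb{F}_2}$) to satisfy the l.s.o.p.\ hypothesis is a sensible clarification that the paper leaves implicit.
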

\medskip

\noindent {\bf Acknowledgement.} The authors would like to thank Professor Satoshi Murai for his interesting discussions  on this subject. The authors would like to thank the referee for the valuable comments to improve this article.

\end{document}